\documentclass[12pt]{amsart}

\usepackage{amsfonts}
\usepackage{amsmath}
\usepackage{amssymb}
\usepackage{amsthm}
\usepackage{enumerate} 
\usepackage{hyperref}
\usepackage[margin=1.2in]{geometry}
\usepackage{graphicx}

\theoremstyle{plain}
\newtheorem{theorem}{Theorem}[section]
\newtheorem{lemma}[theorem]{Lemma}
\newtheorem{proposition}[theorem]{Proposition}

\theoremstyle{definition}

\newtheorem{remark}[theorem]{Remark}

\numberwithin{equation}{section}

\newcommand\N{\mathbb{N}}
\newcommand\Z{\mathbb{Z}}
\newcommand\R{\mathbb{R}}
\newcommand\Rp{\R_{+}}
\newcommand\C{\mathbb{C}}

\newcommand\D{\mathcal{D}}
\newcommand\E{\mathcal{E}}
\renewcommand\S{\mathcal{S}}
\renewcommand\O{\mathcal{O}}
\newcommand\OC{\O_{C}}
\newcommand\OM{\O_{M}}

\newcommand\ev[2]{\langle#1,#2\rangle}
\newcommand\compltens{\widehat{\otimes}}

\DeclareMathOperator\supp{supp}

\newcommand{\rotcong}{\rotatebox[origin=c]{-90}{$\cong$}}


\begin{document}

\title[Explicit sequence space representations on $\Rp$]{Explicit commutative sequence space representations of function and distribution spaces on the real half-line}
\author[A. Debrouwere]{Andreas Debrouwere}
\address{Department of Mathematics and Data Science \\ Vrije Universiteit Brussel, Belgium\\ Pleinlaan 2 \\ 1050 Brussels \\ Belgium}
\email{andreas.debrouwere@vub.be}

\author[L. Neyt]{Lenny Neyt}
\thanks{L. Neyt gratefully acknowledges support by the Research Foundation--Flanders through the postdoctoral grant 12ZG921N}
\address{Department of Mathematics: Analysis, Logic and Discrete Mathematics\\ Ghent University\\ Krijgslaan 281\\ 9000 Gent\\ Belgium}
\email{lenny.neyt@UGent.be}

\author[J. Vindas]{Jasson Vindas}
\thanks {J. Vindas was supported by Ghent University through the BOF-grant 01J04017 and by the Research Foundation--Flanders through the FWO-grant G067621N}
\address{Department of Mathematics: Analysis, Logic and Discrete Mathematics\\ Ghent University\\ Krijgslaan 281\\ 9000 Gent\\ Belgium}
\email{jasson.vindas@UGent.be}

\subjclass[2010]{46E10, 46F05, 46A45, 42C40}
\keywords{Sequence space representations; Valdivia-Vogt table; function and distribution spaces on the real half-line; orthonormal wavelets}

\begin{abstract}
We provide explicit commutative sequence space representations for classical function and distribution spaces on the real half-line. This is done by evaluating at the Fourier transforms of the elements of an orthonormal wavelet basis. 
\end{abstract}
\maketitle

\section{Introduction: the commutative Valdivia-Vogt table on $\R_{+}$}

Sequence space representations of  spaces of smooth functions and distributions provide  great insight into their linear topological structure. Such isomorphic classification goes back to the works of Valdivia and Vogt in the early 1980s, where they independently discovered sequence space representations for most of the spaces used in Schwartz' distribution theory \cite{S-ThDistr}, see \cite{V-RepDK, V-SpDLp, V-RepOM, V-TopicLCS, V-SeqSpRepTestFuncDist}. The table of all these spaces and their sequence space representations was therefore called the Valdivia-Vogt structure table, and it has been recently completed by Bargetz \cite{B-RepOC, B-ComplVVTab}. The original proofs of many of the isomorphisms in this table are based on the Pelczi\'nsky decomposition method and are therefore  non-constructive. Hence, it is an interesting question to construct explicit  sequence space representations \cite{B-CommVVTab,B-Explicit, BDN-Explicit, OW-Explicit}. Furthermore, this often leads to sequence space representation tables  that may be interpreted as a commutative diagram \cite{B-CommVVTab, BDN-Explicit}.

In this article we provide the explicit construction of a  commutative sequence space representation table which includes many of the well-known spaces of smooth functions and distributions on $\Rp=(0,\infty)$. Besides delivering commutativity, our technique appears to be simpler than other methods considered in the literature. It uses the Fourier transform of a band-limited smooth orthonormal wavelet.

For the construction of our isomorphisms we consider a fixed even orthonormal wavelet $\eta$ for which its Fourier transform $\widehat{\eta}(x) = \int_{\R} \eta(t) e^{- 2 \pi i  x t} dt$ is a compactly supported real-valued smooth function (whose existence is guaranteed by a classical construction of Lemari\'{e} and Meyer \cite{H-W-FirstCourseWavelets}). We point out that $\widehat{\eta}$ must necessarily vanish in a neighborhood of the origin  \cite[Theorem 2.7, p. 108]{H-W-FirstCourseWavelets}. As $\eta_{j, k} = 2^{j / 2} \eta(2^{j} \cdot - k)$, $j, k \in \Z$, form an orthonormal basis for $L^{2}(\R)$, so do 
	\[ \widehat{\eta}_{j, k}(x) = 2^{- \frac{j}{2}} e^{-\frac{2 \pi i k}{2^{j}} x} 
	\widehat{\eta}
	\left(\frac{x}{2^{j}}\right), \qquad j,k \in \Z, \] 
by the Plancherel theorem. This implies that $\psi_{j, k} = \sqrt{2} \widehat{\eta}_{j, k|_{\Rp}}$, $j,k \in \Z$,
give rise to an orthonormal basis for $L^{2}(\Rp)$.

Let $\D(\Rp)$ denote the space of all smooth functions $\varphi$ on $\R$ such that $\supp \varphi \Subset \Rp$, endowed with its natural $(LF)$-space topology. We write $\D^{\prime}(\Rp)$ for the strong dual of $\D(\Rp)$, the space of distributions on the real half-line. As $\psi_{j, k} \in \D(\Rp)$ for any $j,  k \in \Z$, we find the following continuous linear mappings
	\[ \Psi_{j, k} : \D^{\prime}(\Rp) \rightarrow \C : \quad f \mapsto \ev{f}{\psi_{j, k}} . \]
These evaluations will enable us to establish explicit commutative sequence space representations,
 as described in the following theorem. 
All undefined spaces will be considered in Section \ref{s:Proof}, where a proof of Theorem \ref{t:CommSeqRepR+} will be given. 
We only remark here that the symbols $\bigoplus$ and $\Pi$ stand for  direct sums and Cartesian products of topological vector spaces \cite{Jarchow} and that we use standard notation from the theory of completed tensor products of topological vector spaces \cite{G-ProdTensTopEspNucl, Jarchow}.

	\begin{theorem}
		\label{t:CommSeqRepR+}
		The following table of isomorphisms holds
			\[ \begin{matrix}
				\D(\Rp) & \subset & \S(\Rp) & \subset & \OC(\Rp) & \subset & \OM(\Rp) & \subset & \E(\Rp) \\
				\rotcong & ~ & \rotcong &  ~ & \rotcong & ~ & \rotcong & ~ & \rotcong \\
				\bigoplus_{\Z} s & ~ & \kappa_1 \compltens s & ~ &  \kappa_1^{\prime}\compltens_{\iota} s  & ~ & \kappa_1^{\prime} \compltens s & ~ & \prod_{\Z} s \\ 
				~ & ~ & ~ & ~ & ~ & ~ & ~ & ~ & ~ \\
				\E^{\prime}(\Rp) & \subset & \OM^{\prime}(\Rp) & \subset & \OC^{\prime}(\Rp) & \subset & \S^{\prime}(\Rp) & \subset & \D^{\prime}(\Rp) \\ 
				\rotcong & ~ & \rotcong & ~ & \rotcong & ~ & \rotcong & ~ & \rotcong \\
				\bigoplus_{\Z} s^{\prime} & ~ & \kappa_1 \compltens_{\iota} s^{\prime} & ~ &  \kappa_1 \compltens s^{\prime} & ~ & \kappa_1^{\prime} \compltens s^{\prime} & ~ & \prod_{\Z} s^{\prime}
			\end{matrix} \]
and all the isomorphisms are induced by the restrictions of the mapping
			\begin{equation}
				\label{eq:Psi} 
				\Psi : \D^{\prime}(\Rp) \rightarrow \prod_{\Z} s^{\prime} : \quad f \mapsto \Psi(f) := ((\Psi_{j, k}(f))_{k \in \Z})_{j \in \Z} . 
			\end{equation}
	\end{theorem}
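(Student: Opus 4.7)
I would show that the single map $\Psi$ of \eqref{eq:Psi} restricts to a topological isomorphism from each of the ten spaces onto its counterpart in the table, with inverse given by the wavelet synthesis $(c_{j,k})\mapsto \sum_{j,k} c_{j,k}\psi_{j,k}$. Since every isomorphism in the diagram is then a restriction of one and the same map, the commutativity of the table (compatibility with the displayed inclusions) is automatic. The task for each space $X$ thus reduces to a \emph{characterisation lemma}: $f \in X$ if and only if the coefficient matrix $(\Psi_{j,k}(f))_{j,k}$ lies in the corresponding sequence space, with equivalent seminorms.

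\textbf{Key estimates on $\psi_{j,k}$.} All characterisations are driven by two elementary properties of $\psi_{j,k} = \sqrt{2}\cdot 2^{-j/2}e^{-2\pi i kx/2^j}\widehat{\eta}(x/2^j)$. First, since $\widehat{\eta}$ is compactly supported away from $0$, there exist $0<\alpha<\beta$ with $\supp \psi_{j,k}\subseteq[\alpha 2^j,\beta 2^j]\subset\Rp$ for all $j,k\in\Z$, so the index $j$ corresponds to a dyadic localisation scale covering $\Rp$. Second, Leibniz's rule applied to the product $e^{-2\pi i kx/2^j}\widehat{\eta}(x/2^j)$ yields weighted derivative bounds of the form
\[
\sup_{x\in\Rp} x^p\bigl|\psi_{j,k}^{(m)}(x)\bigr| \le C_{p,m}\,2^{j(p-m-1/2)}(1+|k|)^m,\qquad p,m\in\N,
\]
and integration by parts against the oscillating factor $e^{-2\pi i kx/2^j}$ converts smoothness of $f$ into decay of $\Psi_{j,k}(f)$ in $k$.

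\textbf{Space-by-space analysis.} I would handle the spaces in order of increasing delicacy. For $\D(\Rp)$ the condition $\supp f\Subset \Rp$ forces $\Psi_{j,k}(f)=0$ outside a finite set of $j$'s, and integration by parts yields $s$-decay in $k$ on each slice; this gives $\D(\Rp)\cong \bigoplus_\Z s$, and the mirror argument without support restriction yields $\E(\Rp)\cong \prod_\Z s$. For $\S(\Rp)$, using that $f$ is characterised by the boundedness of $x^p\partial^m f$ for all $p,m$, the dyadic support of $\psi_{j,k}$ converts the $x^p$-decay and the $\partial^m$-decay into rapid decay in $|j|$ on \emph{both} sides (lower scales from the $x^p$-weight, upper scales from $\partial^m f$), while integration by parts still produces rapid decay in $k$; the resulting two-sided rapid decay in $j$ is exactly $\kappa_1\compltens s$. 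The dual statements $\E^\prime,\ \S^\prime,\ \D^\prime$ then follow by transposition, matching $\bigoplus_\Z s^\prime$, $\kappa_1^\prime\compltens s^\prime$, and $\prod_\Z s^\prime$. For the converse (synthesis) direction, the same derivative bounds guarantee convergence of $\sum c_{j,k}\psi_{j,k}$ in each target space.

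\textbf{Expected main obstacle.} The delicate step is the treatment of $\OM(\Rp)$ and $\OC(\Rp)$ together with their strong duals, and in particular the appearance of the injective tensor product $\compltens_\iota$ in four of the representations. Here one must convert asymmetric two-sided conditions on $f$ (for example, slowly increasing derivatives of $\OM$-type, or the $\S$-convolution structure of $\OC$) into matching asymmetric growth/decay conditions on $(\Psi_{j,k}(f))$ in the scale index $j$, while retaining rapid decay in $k$. I would base the argument on the standard descriptions of $\OM(\Rp)$ and $\OC(\Rp)$ via weighted seminorms $\sup_{x\in\Rp} h(x)|\partial^m f(x)|$ with $h$ polynomially increasing or decreasing, and match these with the natural seminorms on $\kappa_1^\prime\compltens_\iota s$ and $\kappa_1^\prime\compltens s$ using the same dyadic/oscillation estimates. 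The distinction $\compltens_\iota$ versus $\compltens$ reflects non-completeness phenomena in the natural projective tensor products, and I would handle it through the representation of the injective tensor product by equicontinuous families of dual elements. Once all characterisations are in place, the ten isomorphism claims of Theorem \ref{t:CommSeqRepR+} follow from pairing the analysis map $\Psi$ with the synthesis map.
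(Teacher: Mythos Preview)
Your overall strategy---show that $\Psi$ and the wavelet synthesis are mutually inverse continuous maps on each pair, and obtain the dual isomorphisms by transposition---is exactly the paper's. But two points in your proposal are off.

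First, your mechanism for $\S(\Rp)$ is misattributed. You claim rapid decay of $\Psi_{j,k}(f)$ at ``lower scales'' ($j\to-\infty$) comes from the $x^p$-weight and at ``upper scales'' ($j\to+\infty$) from $\partial^m f$. In fact the seminorms $\sup_x x^p|\partial^m f|$ with $p,m\ge 0$ only encode polynomial decay of $f$ at $+\infty$, which yields decay of the coefficients as $j\to+\infty$; they say nothing about behaviour near $0$. Decay as $j\to-\infty$ comes instead from flatness of $f$ at the origin, i.e.\ from finiteness of seminorms with weights $x^{-q}$ near $0$, which is built into the definition $\S(\Rp)=\{\varphi\in\S(\R):\supp\varphi\subseteq[0,\infty)\}$ but is not captured by the seminorms you wrote down. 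The paper avoids this confusion by working throughout with the two-sided weights $\omega_\lambda(x)=(\max\{x,1/x\})^\lambda$ and the Banach spaces $X_{\lambda,n}$ of \eqref{eq Banach aux X}. One then has $\S(\Rp)=\varprojlim_{\lambda>0,\,n}X_{\lambda,n}$, $\OC(\Rp)=\varinjlim_{\lambda<0}\varprojlim_n X_{\lambda,n}$, and $\OM(\Rp)=\varprojlim_n\varinjlim_{\lambda<0}X_{\lambda,n}$, and a \emph{single} estimate (Lemma~\ref{lemma iso XY}) showing that $\Psi:X_{\lambda,n}\to Y_{\lambda-1/2,n}$ and $\Psi^{-1}:Y_{\lambda,n+2}\to X_{\lambda-1,n}$ are continuous disposes of all three cases and their duals simultaneously. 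This replaces your ad-hoc space-by-space programme for $\S$, $\OC$, $\OM$ by one uniform calculation.

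Second, your plan to handle the $\compltens_\iota$ versus $\compltens$ distinction ``through the representation of the injective tensor product by equicontinuous families of dual elements'' is unnecessarily elaborate. The paper simply records the explicit double-sequence descriptions $\kappa_1'\compltens_\iota s=\varinjlim_{\lambda<0}\varprojlim_n Y_{\lambda,n}$ and $\kappa_1'\compltens s=\varprojlim_n\varinjlim_{\lambda<0}Y_{\lambda,n}$ (Remark~\ref{remark-double}), which follow from standard results on tensor products of K\"othe-type spaces; once these are in hand the $\iota$/$\pi$ distinction is purely a matter of the order of the limits, and matches directly the order of limits in the descriptions of $\OC(\Rp)$ and $\OM(\Rp)$ above.
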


Theorem \ref{t:CommSeqRepR+}  implies that $\psi_{j, k}$, $j,k \in \Z$, form a common unconditional Schauder basis for all the spaces occurring  in the table of Theorem \ref{t:CommSeqRepR+}. 
Finally, we  remark that, by tensoring, we may extend Theorem \ref{t:CommSeqRepR+} to any open orthant $\Rp^{d} = (0, \infty)^{d}$. Moreover, by composing one obtains commutative sequence space representation tables for spaces of smooth functions and distributions on any open set $\Omega \subseteq \R^{d}$ that is globally $C^{\infty}$-diffeomorphic to $\Rp^{d}$. Particular examples are the whole space $\R^{d}$ (cf.\  \cite{BDN-Explicit}), open balls, and open bands. 
\section{The proof of Theorem \ref{t:CommSeqRepR+}}
\label{s:Proof}

In this section we verify the isomorphisms in the table of Theorem \ref{t:CommSeqRepR+} as restrictions of the mapping $\Psi$ in \eqref{eq:Psi}. However, due to the following lemma, it suffices just to show the isomorphisms for the smooth function spaces, as this automatically entails the isomorphisms for the dual spaces. 

	\begin{lemma}
		\label{l:IsoFuncSpImpliesIsoDual}
		Let $X$ be any of the smooth function spaces appearing in the table of Theorem \ref{t:CommSeqRepR+} and let $S$ be the corresponding sequence space. If $\Psi_{| X} : X \rightarrow S$ is an isomorphism, then so is $\Psi_{| X^{\prime}} : X^{\prime} \rightarrow S^{\prime}$. 
	\end{lemma}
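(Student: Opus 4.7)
The plan is to recognize $\Psi_{|X'}$ as (the appropriate identification of) the strong transpose of $\Psi_{|X}$ and then invoke the general principle that the strong transpose of a topological isomorphism is a topological isomorphism.

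The first step is to make precise the duality between the two rows of the table on the sequence-space side. Equipped with the natural bilinear pairing $\ev{(a_{j,k})}{(c_{j,k})}=\sum_{j,k}a_{j,k}c_{j,k}$, the bottom-row sequence space $S'$ corresponding to $X'$ is exactly the strong dual of the top-row sequence space $S$ corresponding to $X$. The identification $(\bigoplus_{\Z}s)'_{\beta}\cong\prod_{\Z}s'$ is classical, and for the tensor product entries the required dualities
\[
(\kappa_1\compltens s)'_{\beta}\cong\kappa_1'\compltens s',\qquad
(\kappa_1'\compltens_\iota s)'_{\beta}\cong\kappa_1\compltens s',\qquad
(\kappa_1'\compltens s)'_{\beta}\cong\kappa_1\compltens_\iota s'
\]
are standard consequences of the theory of completed tensor products of nuclear (F)- and (DFS)-spaces; they will be recorded as needed in Section~\ref{s:Proof}.

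Second, under this identification, if $\Psi_{|X}:X\to S$ is a topological isomorphism, then its strong transpose $(\Psi_{|X})^{t}:S'_{\beta}\to X'_{\beta}$ is automatically a topological isomorphism, since a linear homeomorphism and its inverse both send bounded sets to bounded sets, so the polars match up and strong continuity transfers to the transpose in both directions. Third, I would verify that $\Psi_{|X'}$ coincides with $((\Psi_{|X})^{t})^{-1}$. The hypothesis that $\Psi_{|X}:X\to S$ is a topological isomorphism, together with the fact that the canonical unit vectors form an unconditional Schauder basis in each of the sequence spaces $S$ appearing in the top row, implies that $(\psi_{j,k})_{j,k\in\Z}$ is an unconditional Schauder basis of $X$ with coefficient functionals $\Psi_{j,k}$, so that $\varphi=\sum_{j,k}\Psi_{j,k}(\varphi)\psi_{j,k}$ in $X$ for every $\varphi\in X$. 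Pairing both sides with $f\in X'$ yields
\[
\ev{f}{\varphi}=\sum_{j,k}\Psi_{j,k}(\varphi)\ev{f}{\psi_{j,k}}=\ev{\Psi_{|X}(\varphi)}{\Psi(f)},
\]
which says precisely that $\Psi(f)\in S'$ represents the functional $((\Psi_{|X})^{t})^{-1}(f)$. Hence $\Psi_{|X'}:X'\to S'$ is a topological isomorphism.

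The main obstacle is the bookkeeping in the first step: correctly identifying the strong duals of the tensor product sequence spaces, and tracking the switch between $\compltens$ and $\compltens_\iota$ when $\kappa_1$ is replaced by its (DFS)-dual $\kappa_1'$. Once these dualities are in place, the remainder of the argument is purely formal transpose theory.
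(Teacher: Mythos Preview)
Your approach is correct and essentially the same as the paper's: both observe that the transpose $(\Psi_{|X})^{t}:S'\to X'$ is an isomorphism and then identify $\Psi_{|X'}$ with $((\Psi_{|X})^{t})^{-1}$. The only difference is in the verification of this last identity: the paper tests $\Psi_{|X}^{t}(c)$ on the single element $\psi_{j_0,k_0}$ and uses $\Psi_{|X}(\psi_{j_0,k_0})=e_{j_0,k_0}$ (orthonormality) to get $\ev{\Psi_{|X}^{t}(c)}{\psi_{j_0,k_0}}=c_{j_0,k_0}$, whereas you expand an arbitrary $\varphi\in X$ in the Schauder basis and pair with $f\in X'$---both computations establish the same thing.
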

	
	\begin{proof}
		Since $\Psi_{| X}$ is an isomorphism, so is $\Psi^{t}_{| X} : S^{\prime} \rightarrow X^{\prime}$. Now, for any $(c_{j, k})_{j, k \in \Z} \in S^{\prime} \subseteq \prod_{\Z} s^{\prime}$, as $\psi_{j, k} \in X$ for all $j, k \in \Z$, we have that $\ev{\Psi^{t}_{| X}((c_{j, k})_{j, k \in \Z})}{\psi_{j_{0}, k_{0}}} = \ev{(c_{j, k})_{j, k \in \Z}}{\Psi_{| X}(\psi_{j_0, k_0})} = c_{j_{0}, k_{0}}$ for any $j_{0}, k_{0} \in \Z$. This shows that $\Psi_{| X^{\prime}} = (\Psi^{t}_{| X})^{-1}$, whence $\Psi_{| X^{\prime}}$ is an isomorphism. 
	\end{proof}

\begin{remark}\label{remark-duality}
In connection with Lemma \ref{l:IsoFuncSpImpliesIsoDual}, we point out the following duality relations
\begin{gather*}
\big( \bigoplus_{\Z} s \big)' =  \prod_{\Z} s^{\prime}, \quad  (\kappa_1 \compltens s)' = \kappa'_1 \compltens s'  , \quad  (\kappa_1^{\prime}\compltens_{\iota} s)' =  \kappa_1\compltens s' \\
 ( \kappa_1^{\prime} \compltens s)' =  \kappa_1 \compltens_{\iota} s', \quad \big(\prod_{\Z} s \big)' = \bigoplus_{\Z} s',
\end{gather*}
where $\kappa_1$ and $\kappa'_1$ are defined in \eqref{exp-seq} and \eqref{exp-seq-dual} below. The first and last isomorphisms are obvious.  The second one follows from  \cite[Th\'eor\`eme 12, p.\ 76, Chapitre II]{G-ProdTensTopEspNucl}. The spaces  $\kappa_1^{\prime} \compltens s$ and $\kappa_1\compltens s'$ are bornological  \cite[Corollaire 1, p.\ 127, Chapitre II]{G-ProdTensTopEspNucl}. Hence,  the  fourth isomorphism follows from \cite[Corollaire, p.\ 90, Chapitre II]{G-ProdTensTopEspNucl}. By the same result, we have that $(\kappa_1\compltens s')' = \kappa_1^{\prime}\compltens_{\iota} s$, whence the third isomorphism is a consequence of the fact that the space $\kappa_1\compltens s'$ is reflexive (as it is bornological).
\end{remark}

We now perform some preliminary calculations that will be applied several times in what follows. Let us introduce some notation. We denote by $\E(\Rp)$ the space of all smooth functions on $\Rp$. Then, for any compact subset $K \Subset \Rp$ and any $n \in \N$, we consider the seminorm $\|\varphi\|_{C^{n}_{K}} = \max_{m \leq n} \sup_{x \in K} |\varphi^{(m)}(x)|$, $\varphi \in \E(\Rp)$. Also, given $n\in \Z$, the space $s^{n}$ stands for the Banach space of all sequences $(c_{k})_{k \in \Z} \in \C^{\Z}$ such that $\|(c_{k})_{k \in \Z}\|_{s^{n}} = \sup_{k \in \Z} (1 + |k|)^{n} |c_{k}| < \infty$. Then, 
	\[ s = \bigcap_{n \in \N} s^{n} \quad \text{ and } \quad s^{\prime} = \bigcup_{n \in \N} s^{-n} , \]  
endowed with their natural topologies. Set $\psi = \psi_{0,0} = \sqrt{2} \widehat{\eta}_{|_{\Rp}}$. We denote by $K_{\psi} = [R_{0}, R_{1}]$, $0 < R_{0} < R_{1}$, the smallest closed interval containing the support of $\psi$ in $\Rp$, and similarly $\operatorname*{supp} \psi_{j, k}\subseteq K_{\psi_{j, k}} = [2^{j} R_{0}, 2^{j} R_{1}]$ for any $j, k \in \Z$. We first find the following upper bound for $\Psi_{j, k}(\varphi)$ for any smooth function $\varphi$.

	\begin{lemma}
	\label{lemma1SR}
		For every $n \in \N$, there exists a $C_{n} > 0$ such that for any $\varphi \in \E(\Rp)$,
			\begin{equation}
				\label{eq:PsiCinfty}
				\left| \Psi_{j, k}(\varphi) \right| \leq C_{n} (1 + |k|)^{-n} \left( 2^{j / 2} \|\varphi(2^{j} \cdot)\|_{C^{n}_{K_{\psi}}} \right) , \qquad \forall j, k \in \Z. 
			\end{equation}
	\end{lemma}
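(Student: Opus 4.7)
The strategy is the standard one for showing rapid decay of Fourier-type coefficients: rewrite $\Psi_{j,k}(\varphi)$ as a Fourier coefficient in a normalized variable, and then integrate by parts $n$ times, absorbing the dilation factors $2^{jm}$ into the norm $\|\varphi(2^j\cdot)\|_{C^n_{K_\psi}}$.

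First I would substitute $y=x/2^j$ in the definition
\[
\Psi_{j,k}(\varphi)=\sqrt{2}\,2^{-j/2}\int_{\Rp}\varphi(x)\,e^{-2\pi i k x/2^j}\,\widehat{\eta}(x/2^j)\,dx,
\]
which gives
\[
\Psi_{j,k}(\varphi)=\sqrt{2}\,2^{j/2}\int_{R_0}^{R_1}\varphi(2^{j}y)\,\widehat{\eta}(y)\,e^{-2\pi i k y}\,dy.
\]
This already yields \eqref{eq:PsiCinfty} for $k=0$: the integral is bounded by $(R_1-R_0)\|\widehat{\eta}\|_\infty\,\|\varphi(2^j\cdot)\|_{C^0_{K_\psi}}$, and $(1+|k|)^{-n}=1$.

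For $k\ne 0$, I would integrate by parts $n$ times using $e^{-2\pi i k y}=\bigl(\frac{-1}{2\pi i k}\bigr)\frac{d}{dy}e^{-2\pi i k y}$. The boundary terms vanish because $\widehat{\eta}$ is smooth and compactly supported inside $\Rp$, so $\widehat{\eta}$ and all its derivatives vanish at $R_0$ and $R_1$. This gives
\[
\Psi_{j,k}(\varphi)=\frac{\sqrt{2}\,2^{j/2}}{(2\pi i k)^{n}}\int_{R_0}^{R_1}\frac{d^{n}}{dy^{n}}\bigl[\varphi(2^{j}y)\,\widehat{\eta}(y)\bigr]\,e^{-2\pi i k y}\,dy.
\]
Applying Leibniz to the $n$-th derivative, each term has the form $\binom{n}{m}\,2^{jm}\varphi^{(m)}(2^{j}y)\,\widehat{\eta}^{(n-m)}(y)$ with $0\le m\le n$. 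The crucial observation is that, by the chain rule, $2^{jm}\varphi^{(m)}(2^{j}y)$ is exactly the $m$-th derivative of $y\mapsto\varphi(2^{j}y)$, so each term is bounded on $K_\psi$ by $\|\varphi(2^{j}\cdot)\|_{C^n_{K_\psi}}$ times a constant depending only on $n$ and $\max_{\ell\le n}\sup|\widehat{\eta}^{(\ell)}|$. Together with the length $R_1-R_0$ of the interval, this yields
\[
|\Psi_{j,k}(\varphi)|\le C'_n\,|k|^{-n}\,2^{j/2}\,\|\varphi(2^j\cdot)\|_{C^n_{K_\psi}}\qquad(k\ne 0).
\]

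Finally, since $|k|^{-n}\le 2^{n}(1+|k|)^{-n}$ for all $|k|\ge 1$ and the $k=0$ case has already been handled, combining the two estimates with $C_n=\max\{C'_n\cdot 2^n,\sqrt{2}(R_1-R_0)\|\widehat{\eta}\|_\infty\}$ gives \eqref{eq:PsiCinfty} uniformly in $j,k\in\Z$. No serious obstacle is expected; the only point that needs a moment of care is tracking that the chain-rule factors $2^{jm}$ produced by differentiating $\varphi(2^j y)$ are precisely absorbed by the norm $\|\varphi(2^j\cdot)\|_{C^n_{K_\psi}}$, so that no additional $j$-dependence beyond the overall $2^{j/2}$ prefactor appears in the bound.
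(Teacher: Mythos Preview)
Your proof is correct and follows essentially the same approach as the paper: integrate by parts $n$ times against the oscillatory factor $e^{-2\pi i k y}$, apply Leibniz, and observe that the chain-rule factors $2^{jm}$ are absorbed into $\|\varphi(2^{j}\cdot)\|_{C^{n}_{K_{\psi}}}$. The only cosmetic difference is that you first substitute $y=x/2^{j}$ before integrating by parts, whereas the paper integrates by parts directly in the $x$-variable and then substitutes; the resulting estimates are identical.
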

	
	\begin{proof}
		For any $n \in \N$, we have for arbitrary $\varphi \in \E(\Rp)$ and $j,k \in \Z$,
			\begin{align*}
				\left| k^{n} \int_{0}^{\infty} \varphi(x) \psi_{j, k}(x) dx \right|
				&\leq \frac{2^{nj - j / 2}}{(2 \pi)^{n}} \sum_{m \leq n} {n \choose m} \frac{1}{2^{(n - m)j}} \int_{0}^{\infty} |\varphi^{(m)}(x)| |\psi^{(n - m)}\left(\frac{x}{2^{j}}\right)| dx \\
				&\leq \frac{2^{j / 2}}{(2 \pi)^{n}} \|\psi\|_{C^{n}_{K_{\psi}}} \sum_{m \leq n} {n \choose m} \int_{K_{\psi}} | \varphi(2^{j} x)^{(m)} | dx \\
				&\leq \frac{2^{n} (R_1-R_0) \|\psi\|_{C^{n}_{K_{\psi}}}}{(2 \pi)^{n}} \left( 2^{j / 2} \|\varphi(2^{j} \cdot)\|_{C^{n}_{K_{\psi}}} \right),
			\end{align*}
which yields the assertion. 

	\end{proof}
We also have the next bound.

	\begin{lemma}
		\label{l:PsiInvD}
		For any $n \in \N$, there exists a $C'_{n} > 0$ such that for all $(c_{k})_{k \in \Z} \in s^{n + 2}$,
			\begin{equation}
				\label{eq:PsiInvD}
				\max_{0 \leq  m \leq n}\sup_{x \in \Rp} \left| \frac{d^{m}}{dx^{m}} \sum_{k \in \Z} c_{k} \psi_{j, k}(x) \right| \leq C'_{n} \frac{\|(c_{k})_{k \in \Z}\|_{s^{n + 2}}}{2^{j/2} \min\{1, 2^{jn}\}} , \qquad \forall j \in \Z .
			\end{equation}
	\end{lemma}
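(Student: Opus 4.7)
The plan is to reduce the claim to a pointwise bound on the derivatives of each $\psi_{j,k}$ and then sum in $k$ using the weight coming from $s^{n+2}$. Concretely, I would write
\[ \psi_{j,k}(x) = \sqrt{2}\, 2^{-j/2} e^{-2\pi i k x/2^{j}} \widehat{\eta}(x/2^{j}) \]
and apply the Leibniz rule to compute $\psi_{j,k}^{(m)}$. The exponential contributes a factor $(-2\pi i k / 2^{j})^{l}$, while the chain rule on $\widehat{\eta}(\cdot/2^{j})$ contributes $2^{-j(m-l)}$; the two powers of $2^{j}$ combine into a single $2^{-jm}$ independent of $l$. Using $\operatorname{supp}\widehat{\eta}\subseteq K_{\psi}$ and the binomial theorem on $\sum_{l\leq m}\binom{m}{l}(2\pi|k|)^{l}$, this gives an estimate of the shape
\[ |\psi_{j,k}^{(m)}(x)| \leq A_{n}\, 2^{-j/2}\, \frac{(1+|k|)^{m}}{2^{jm}}\, \chi_{[2^{j}R_{0},\, 2^{j}R_{1}]}(x), \qquad 0\leq m\leq n, \]
with a constant $A_{n}$ depending only on $n$ and $\|\psi\|_{C^{n}_{K_{\psi}}}$.

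Next, for $(c_{k})\in s^{n+2}$ I would use $|c_{k}|\leq \|(c_{k})\|_{s^{n+2}}(1+|k|)^{-(n+2)}$, so that
\[ |c_{k}|(1+|k|)^{m} \leq \|(c_{k})\|_{s^{n+2}} (1+|k|)^{m-n-2} \leq \|(c_{k})\|_{s^{n+2}}(1+|k|)^{-2} \]
for every $0\leq m\leq n$. Since $\sum_{k\in\Z}(1+|k|)^{-2}<\infty$, the series $\sum_{k} c_{k}\psi_{j,k}$ and each of its first $n$ term-by-term derivatives converge absolutely and uniformly on $\R$ (the supports are in a common compact set independent of $k$), which both justifies the termwise differentiation and yields
\[ \sup_{x\in\Rp}\left|\frac{d^{m}}{dx^{m}}\sum_{k\in\Z} c_{k}\psi_{j,k}(x)\right| \leq A_{n}\Bigl(\sum_{k}(1+|k|)^{-2}\Bigr)\, \frac{\|(c_{k})\|_{s^{n+2}}}{2^{j/2}\, 2^{jm}}. \]

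Finally, taking the maximum over $0\leq m\leq n$ reduces to evaluating $\max_{0\leq m\leq n} 2^{-jm}$. For $j\geq 0$ the maximum is attained at $m=0$ and equals $1$; for $j<0$ it is attained at $m=n$ and equals $2^{-jn}$. In either case this maximum is $1/\min\{1,2^{jn}\}$, which gives exactly the right-hand side of \eqref{eq:PsiInvD} with $C'_{n}=A_{n}\sum_{k}(1+|k|)^{-2}$. The only mildly delicate point is keeping track of the dependence on $j$ in the two regimes $j\geq 0$ and $j<0$ so that the exponent $jm$ (rather than $jn$) appearing inside the estimate is handled correctly by the max; everything else is bookkeeping on top of the Leibniz computation.
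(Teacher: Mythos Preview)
Your argument is correct and follows essentially the same route as the paper: bound $|\psi_{j,k}^{(m)}(x)|$ via the Leibniz rule, absorb the factor $(1+|k|)^{m}$ using the $s^{n+2}$-norm, and sum the remaining $(1+|k|)^{-2}$. The paper's proof records only the single-line estimate for the $n$-th derivative and leaves the passage to $\max_{0\leq m\leq n}$ and the identification $\max_{m\leq n}2^{-jm}=1/\min\{1,2^{jn}\}$ implicit, whereas you spell these steps out explicitly; otherwise the two proofs coincide.
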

	
	\begin{proof} For any $n \in \N$, and arbitrary $(c_{k})_{k \in \Z} \in s^{n+2}$ and $j \in \Z$, we have
	
			\[ \left| \frac{d^{n}}{dx^{n}} \sum_{k \in \Z} c_{k} \psi_{j, k}(x) \right| 
				\leq \sum_{k \in \Z} |c_{k}| |\psi_{j, k}^{(n)}(x)| 
				\leq  \frac{4^{n} \pi^{n + 2}}{3} \|\psi\|_{C^{n}_{K_{\psi}}} \frac{\|(c_{k})_{k \in \Z}\|_{s^{n + 2}}}{2^{j(n + 1/2)}} .
			\]
	
	\end{proof}
	
	We are now ready to establish the isomorphisms in the table of Theorem \ref{t:CommSeqRepR+}. We begin with the spaces $\E(\Rp)$ and $\E^{\prime}(\Rp)$. Note that by Lemma \ref{l:IsoFuncSpImpliesIsoDual} and Remark \ref{remark-duality} we only have to deal with the test function space, a fact we will not mention anymore for the other spaces.

	\begin{proposition}
		The mappings
			\[ \Psi : \E(\Rp) \rightarrow \prod_{\Z} s  \quad \text{ and } \quad \Psi : \E^{\prime}(\Rp) \rightarrow \bigoplus_{\Z} s^{\prime} \]
		are isomorphisms.
	\end{proposition}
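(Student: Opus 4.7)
The plan is to restrict attention to the function space isomorphism $\Psi:\E(\Rp)\to\prod_\Z s$; the dual statement $\Psi:\E'(\Rp)\to\bigoplus_\Z s'$ then follows immediately from Lemma~\ref{l:IsoFuncSpImpliesIsoDual} combined with the first duality relation in Remark~\ref{remark-duality}. Continuity of $\Psi$ would come directly from Lemma~\ref{lemma1SR}: fixing $j\in\Z$ and $n\in\N$, estimate~\eqref{eq:PsiCinfty} dominates $(1+|k|)^n|\Psi_{j,k}(\varphi)|$ by a multiple of $\|\varphi(2^j\cdot)\|_{C^n_{K_\psi}}$, which is a continuous seminorm on $\E(\Rp)$ since $2^jK_\psi\Subset\Rp$. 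Hence $\varphi\mapsto(\Psi_{j,k}(\varphi))_k$ is continuous $\E(\Rp)\to s$ for each $j$, so $\Psi$ is continuous into the product.

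To invert $\Psi$, I would exploit that $\eta$ is even, which makes $\widehat\eta$ real and yields $\overline{\psi_{j,k}}=\psi_{j,-k}$; in particular $\{\overline{\psi_{j,k}}\}$ is also an orthonormal basis of $L^2(\Rp)$, so $\int \overline{\psi_{j,k}}\,\psi_{j_0,k_0}\,dx=\delta_{jj_0}\delta_{kk_0}$. Define
\[
T:\prod_\Z s\to\E(\Rp), \qquad T((c_{j,k})):=\sum_{j\in\Z}\sum_{k\in\Z} c_{j,k}\,\overline{\psi_{j,k}}.
\]
The crucial structural observation is that the outer $j$-sum is \emph{locally finite} on $\Rp$: since $\operatorname{supp}\overline{\psi_{j,k}}\subseteq[2^jR_0,2^jR_1]$, any compact $K\Subset\Rp$ intersects only boundedly many of these supports (a range of at most $\lceil\log_2(R_1/R_0)\rceil+1$ consecutive values of $j$). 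For each such $j$, Lemma~\ref{l:PsiInvD} bounds the inner $k$-sum in every $C^n$-norm on $K$ by a multiple of $\|(c_{j,k})_k\|_{s^{n+2}}$. Together these show $T$ takes values in $\E(\Rp)$ and is continuous.

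The identity $\Psi\circ T=\mathrm{id}$ is then a direct calculation: local finiteness together with the rapid decay of $(c_{j,k})_k$ legitimises interchanging the $(j,k)$-sum with the integral defining $\Psi_{j_0,k_0}(T((c_{j,k})))$, and the orthogonality relation collapses the double sum to $c_{j_0,k_0}$. The main obstacle is the opposite identity $T\circ\Psi=\mathrm{id}$ on $\E(\Rp)$, which I would establish by a density argument. For $\varphi\in\D(\Rp)\subset L^2(\Rp)$ one has
\[
\Psi_{j,k}(\varphi)=\int\varphi\,\psi_{j,k}\,dx=\langle\varphi,\overline{\psi_{j,k}}\rangle_{L^2},
\]
so the series defining $T(\Psi(\varphi))$ is precisely the $L^2$-expansion of $\varphi$ in the orthonormal basis $\{\overline{\psi_{j,k}}\}$. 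This series therefore converges to $\varphi$ in $L^2(\Rp)$ by Bessel--Parseval, and it also converges in $\E(\Rp)$ by the previous step applied to $\Psi(\varphi)\in\prod_\Z s$; on every compact $K\Subset\Rp$ the two limits must coincide in $L^2(K)$, yielding $T(\Psi(\varphi))=\varphi$ for all $\varphi\in\D(\Rp)$. Since $\D(\Rp)$ is dense in $\E(\Rp)$ (via cutoffs $\chi_n\in\D(\Rp)$ with $\chi_n\equiv 1$ on $[1/n,n]$) and $T\circ\Psi$ is continuous on $\E(\Rp)$, the identity extends to all of $\E(\Rp)$, completing the proof.
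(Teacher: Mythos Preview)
Your proof is correct and follows essentially the same route as the paper's: continuity of $\Psi$ from Lemma~\ref{lemma1SR}, continuity of the inverse from local finiteness of the $j$-sum together with Lemma~\ref{l:PsiInvD}, and the dual case via Lemma~\ref{l:IsoFuncSpImpliesIsoDual}. Your version is in fact more detailed than the paper's: you carefully track the complex conjugate (writing the inverse with $\overline{\psi_{j,k}}=\psi_{j,-k}$, which is the correct reconstruction formula) and explicitly verify $T\circ\Psi=\mathrm{id}$ via a density argument through $\D(\Rp)$, whereas the paper simply states the inverse formula and checks its continuity.
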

	
	\begin{proof}
		The continuity of $\Psi : \E(\Rp) \rightarrow \prod_{\Z} s$ is a direct consequence of Lemma \ref{lemma1SR}. Let us prove that its inverse $\Psi^{-1}:((c_{j, k})_{k \in \Z})_{j \in \Z}\mapsto \sum_{j, k \in \Z} c_{j, k} \psi_{j, k}$ also maps $\prod_{\Z} s$ continuously into $\E(\Rp)$.
Given a compact subset $K$ in $\Rp$, there are only finitely many $j_{1}, \ldots, j_{m} \in \Z$ such that $K \cap K_{\psi_{j_{l}, k}} \neq \emptyset$, $1 \leq l \leq m$, for any $k \in \Z$. Hence, the sum $\sum_{j, k \in \Z} c_{j, k} \psi_{j, k}$ coincides on $K$ with the sum $\sum_{l = 1}^{m} \sum_{k \in \Z} c_{j_{l}, k} \psi_{j_{l}, k}$, which is smooth by Lemma \ref{l:PsiInvD}. As $K \Subset \Rp$ was arbitrary, we conclude that $\sum_{j, k \in \Z} c_{j, k} \psi_{j, k}$ is a smooth function on $\Rp$ and the continuity of 
$\Psi^{-1}$ follows  from Lemma \ref{l:PsiInvD} as well. \end{proof}
	
	We now verify the isomorphisms for $\D(\Rp)$ and $\D^{\prime}(\Rp)$.
	
	\begin{proposition}
		\label{p:SeqRepD}
		The mappings
			\[ \Psi : \D(\Rp) \rightarrow \bigoplus_{\Z} s \quad \text{ and } \quad \Psi : \D^{\prime}(\Rp) \rightarrow \prod_{\Z} s^{\prime}  \]
		are isomorphisms.
	\end{proposition}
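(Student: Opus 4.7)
The plan is to reduce, via Lemma \ref{l:IsoFuncSpImpliesIsoDual}, to the test function case $\Psi : \D(\Rp) \to \bigoplus_{\Z} s$ and to exploit the inductive limit structures $\D(\Rp) = \varinjlim_{K \Subset \Rp} \D_{K}(\Rp)$ and $\bigoplus_{\Z} s = \varinjlim_{N \in \N} \bigoplus_{|j| \leq N} s$. The whole argument hinges on the \emph{$k$-independent} support bound $\supp \psi_{j, k} \subseteq [2^{j} R_{0}, 2^{j} R_{1}]$ inherited from $\psi$: for each compact $K \Subset \Rp$, only finitely many scales $j$ (say $j \in J(K)$) can produce a nonzero $\Psi_{j, k}(\varphi)$ for $\varphi \in \D_{K}(\Rp)$; conversely, for each $N \in \N$ the set $K_{N} := \bigcup_{|j| \leq N} [2^{j} R_{0}, 2^{j} R_{1}]$ is a compact subset of $\Rp$.

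With this observation, the restriction of $\Psi$ to any Fr\'echet step $\D_{K}(\Rp)$ lands in the finite direct summand $\bigoplus_{j \in J(K)} s \subseteq \bigoplus_{\Z} s$, with the $s$-norm bounds supplied by Lemma \ref{lemma1SR}; passing to the inductive limit gives continuity of $\Psi : \D(\Rp) \to \bigoplus_{\Z} s$. Symmetrically, for any $((c_{j, k})_{k})_{j} \in \bigoplus_{|j| \leq N} s$ the series $\sum_{j, k} c_{j, k} \psi_{j, k}$ collapses to a finite sum in $j$ of smooth functions, each supported in $[2^{j} R_{0}, 2^{j} R_{1}]$, hence defines an element of $\D_{K_{N}}(\Rp)$; Lemma \ref{l:PsiInvD} yields the requisite seminorm estimates, and the universal property of the direct sum then promotes this to a continuous map $\bigoplus_{\Z} s \to \D(\Rp)$.

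That these two maps are mutually inverse will follow exactly as in the preceding $\E(\Rp)$ proposition, using the same formulas, since the underlying orthonormal basis property of $\{\psi_{j, k}\}$ in $L^{2}(\Rp)$ combined with the continuous embedding $\D(\Rp) \hookrightarrow L^{2}(\Rp)$ settles the bijectivity at the level of both sequences and functions.

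The principal obstacle is not computational but topological: one must match the strict $(LF)$-structure of $\D(\Rp)$ with the direct-sum topology of $\bigoplus_{\Z} s$. Concretely, continuity of a linear map $\D(\Rp) \to \bigoplus_{\Z} s$ forces each step $\D_{K}(\Rp)$ to factor through a \emph{finite} direct summand, and it is precisely the $k$-independence of $\supp \psi_{j, k}$ that makes this factorization automatic; without it one would only land in $\prod_{\Z} s$ and lose the essential direct-sum structure.
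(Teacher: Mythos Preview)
Your proposal is correct and follows essentially the same route as the paper: reduce to the test function space via Lemma \ref{l:IsoFuncSpImpliesIsoDual}, use the $k$-independent support intervals $K_{\psi_{j,k}}=[2^{j}R_{0},2^{j}R_{1}]$ to see that only finitely many scales $j$ contribute for a fixed compact $K\Subset\Rp$, and then invoke Lemma \ref{lemma1SR} for the forward map and Lemma \ref{l:PsiInvD} for the inverse. The paper's own proof is more terse but rests on exactly these ingredients; your added remarks on the strict $(LF)$-structure and on bijectivity via the $L^{2}$-embedding merely make explicit what the paper leaves implicit.
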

	
	\begin{proof}
		We first note that $\Psi : \D(\Rp) \rightarrow \bigoplus_{\Z} s$ is well-defined and continuous. Indeed, take any $\varphi \in \D(\Rp)$ and let $K \Subset \Rp$ denote the compact support of $\varphi$. Then, by \eqref{eq:PsiCinfty} we have that $|\Psi_{j, k}(\varphi)| \leq C_{n} 2^{(n + 1/2) |j|} \|\varphi\|_{C^{n}_{K}} (1 + |k|)^{-n}$ for any $j, k \in \Z$. As there are only  finitely many $j \in \Z$ such that $K \cap K_{\psi_{j, k}} \neq \emptyset$, the claim follows. The continuity of $\Psi^{-1}:  \bigoplus_{\Z} s \rightarrow \mathcal{D}(\Rp)$ is a direct consequence of Lemma \ref{l:PsiInvD}.
		
	\end{proof}
	
Let us introduce the remaining spaces. We consider the Fr\'{e}chet space
	\[ \S(\Rp) = \{ \varphi \in \S(\R) : \supp \varphi \subseteq [0, \infty) \} , \]
and its dual $\S^{\prime}(\Rp)$. Note that $\S(\Rp)$ is a closed subspace of $\S(\R)$, the Schwartz space on the real line \cite{S-ThDistr}.
We define $\OM(\Rp)$ as  the space of all those $\varphi \in \E(\Rp)$ for which
	\[ \forall n \in \N ~ \exists \gamma > 0 : \quad \sup_{x \in \Rp} \left(\max\{x,1/x\}\right)^{-\gamma} |\varphi^{(n)}(x)| < \infty , \]
endowed with its natural $(PLB)$-space topology. One readily verifies that $\OM(\Rp)$ is the multiplier space of $\S(\Rp)$, that is, $f \in \OM(\Rp)$ if and only if $f \cdot \varphi \in \S(\Rp)$ for any $\varphi \in \S(\Rp)$.  The space $\OC(\Rp)$ is defined as the $(LF)$-space 
\begin{equation}
\label{eq def OC R+}
\OC(\Rp)= \varinjlim_{\lambda<0} \varprojlim_{n\in \N} X_{\lambda,n}
\end{equation}
where the Banach spaces $X_{\lambda,n}$, $\lambda\in \R,$ $n\in\mathbb{N},$ are given by\footnote{The differential operators $x^{n}D^{n}$ occurring in the definition of $X_{\lambda,n}$ are very natural in the context of the group $\R_{+}$ since they commute with all dilation operators.} 
\begin{equation}
\label{eq Banach aux X}
 X_{\lambda,n}= \{\varphi \in C^{n}(\Rp): \: \|\varphi\|_{X_{\lambda,n}}= \max_{0\leq m\leq n}\sup_{x\in\Rp} (\max\{x,1/x\})^{\lambda} x^{m}|\varphi^{(m)}(x)|   <\infty \}.
\end{equation}
Then $\OC^{\prime}(\Rp)$ is exactly the convolutor space of $\S(\Rp)$, that is, a distribution $f \in \S^{\prime}(\Rp)$ belongs to  $\OC^{\prime}(\Rp)$ if and only\footnote{This can be deduced via an exponential change of variables, under which the space $\mathcal{S}(\Rp)$ corresponds to the space $\mathcal{K}_{1}(\R)$ of exponentially rapidly decreasing smooth functions, whose (additive) convolutor space has predual $\OC(\mathcal{K}_{1})=\{\phi \in \mathcal{E}(\R):\: (\exists\gamma)(\forall n\in\N) \: (\sup_{x\in\R} e^{\gamma |x|} |\phi^{(n)}(x)| <\infty)\},$ see \cite[Section 6]{D-V2021}.} if $f \ast_{M} \varphi \in \S(\Rp)$ for each $\varphi\in\S(\Rp)$, where $\ast_{M}$ stands for Mellin convolution. 

We will work with the following Fr\'{e}chet space of rapidly exponentially decreasing sequences
	\begin{equation}
	\label{exp-seq}
	\kappa_{1} = \left\{ (c_{j})_{j \in \Z} \in \C^{\Z} : \sup_{j \in \Z} 2^{\gamma |j|} |c_{j}| < \infty \text{ for all } \gamma > 0 \right\} , 
	\end{equation}
and its dual space
	\begin{equation}
	\label{exp-seq-dual}
	 \kappa_{1}^{\prime} = \left\{ (c_{j})_{j \in \Z} \in \C^{\Z} : \sup_{j \in \Z} 2^{-\gamma |j|} |c_{j}| < \infty \text{ for some } \gamma > 0 \right\} . 
	 \end{equation}
We mention that $\kappa_{1} \cong s$ and $\kappa^{\prime}_{1} \cong s^{\prime}$.  Finally, we also need to introduce the auxiliary Banach spaces of double sequences
\begin{equation}
\label{eq Banach aux Y}
Y_{\lambda,n}= \left\{(c_{j, k})_{j, k \in \Z} \in \C^{\Z \times \Z}:\: \|(c_{j, k})_{j, k \in \Z}\|_{Y_{\lambda,n}}= \sup_{(j,k)\in \Z\times\Z} 2^{\lambda|j|} (1+|k|)^{n}|c_{j,k}|<\infty  \right\},
\end{equation}
where we let $\lambda\in\R$ and $n\in \N$.

\begin{remark}\label{remark-double}
The completed tensor product sequence spaces occurring in the table of Theorem \ref{t:CommSeqRepR+} may be explicitly described as double sequence spaces  by means of the Banach spaces \eqref{eq Banach aux Y} in the following way
\begin{gather*}
 \kappa_{1} \compltens s= \varprojlim_{\lambda>0,\: n\in\N} Y_{\lambda,n}, \quad \kappa_{1}^{\prime} \compltens_{\iota} s= \varinjlim_{\lambda<0} \varprojlim_{n\in \N} Y_{\lambda,n}, \quad \kappa_{1}^{\prime} \compltens s=\varprojlim_{n\in \N} \varinjlim_{\lambda<0} Y_{\lambda,n}, \\ \\
  \kappa'_{1} \compltens s'= \varinjlim_{\lambda < 0,\: n\in\N} Y_{\lambda,-n}, \quad \kappa_{1} \compltens s'= \varprojlim_{\lambda>0} \varinjlim_{n\in \N} Y_{\lambda,-n}, \quad \kappa_{1} \compltens_{\iota} s' = \varinjlim_{n\in \N} \varprojlim_{\lambda>0} Y_{\lambda,-n}.
\end{gather*}
These isomorphisms follow from general results about completed tensor products, completed tensor product representations of vector-valued sequence spaces \cite{G-ProdTensTopEspNucl, Jarchow}, and the duality relations given in Remark \ref{remark-duality} (see also \cite[Section 3]{B-RepOC}).
\end{remark}

	\begin{proposition}
		\label{p:IsoS&Os}
		The six mappings
			\begin{equation}
			\label {eq iso S} \Psi : \S(\Rp) \rightarrow \kappa_{1} \compltens s, \qquad \Psi : \S^{\prime}(\Rp) \rightarrow \kappa^{\prime}_{1} \compltens s^{\prime} ,
			\end{equation}
			
			\begin{equation*}
			 \Psi : \OC(\Rp) \rightarrow \kappa_{1}^{\prime} \compltens_{\iota} s, \qquad \Psi : \OC^{\prime}(\Rp) \rightarrow \kappa_{1} \compltens s^{\prime}, 
			\end{equation*}
and		
			\begin{equation*}
			\Psi : \OM(\Rp) \rightarrow \kappa_{1}^{\prime} \compltens s, \qquad  \Psi : \OM^{\prime}(\Rp) \rightarrow \kappa_{1} \compltens_{\iota} s^{\prime}
			\end{equation*}
		are isomorphisms.
	\end{proposition}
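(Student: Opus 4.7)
The plan is to establish, for each of the three function spaces $\S(\Rp)$, $\OC(\Rp)$, $\OM(\Rp)$ in turn, that $\Psi$ is a topological isomorphism onto the indicated sequence space; by Lemma~\ref{l:IsoFuncSpImpliesIsoDual} the dual isomorphisms are then automatic. The entire analysis will be performed on the level of the double sequence spaces $Y_{\lambda,n}$ as in Remark~\ref{remark-double}, so it suffices to bound $|\Psi_{j,k}(\varphi)|$ and $|\frac{d^m}{dx^m}\sum_k c_{j,k}\psi_{j,k}(x)|$ with sharp dependence on $j,k,\lambda,n$. The workhorses are Lemma~\ref{lemma1SR} for the forward direction and Lemma~\ref{l:PsiInvD} for reconstruction, combined with two structural remarks: on $K_{\psi_{j,k}}=[2^j R_0,2^j R_1]$ one has $x\asymp 2^j$ and $\max\{x,1/x\}\asymp 2^{|j|}$ (with constants depending only on $R_0,R_1$), and at every fixed $x\in\Rp$ only boundedly many $j\in\Z$ satisfy $x\in K_{\psi_{j,k}}$, so vertical sums over scales are controlled by suprema.

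For $\S(\Rp)\cong\kappa_1\compltens s$: in the forward direction, Lemma~\ref{lemma1SR} reduces the problem to bounding $2^{j/2}\|\varphi(2^j\cdot)\|_{C^n_{K_\psi}}$ by a constant multiple of $2^{-\gamma|j|}$ for every $\gamma>0$; for $j\to+\infty$ this uses the Schwartz decay of $\varphi$ at infinity, and for $j\to-\infty$ it uses that every $\varphi\in\S(\R)$ with $\supp\varphi\subseteq[0,\infty)$ vanishes to infinite order at the origin. The reverse direction follows from Lemma~\ref{l:PsiInvD} combined with the elementary bound $(1+x)^p\lesssim 2^{jp}$ on $\supp\psi_{j,k}$ for $j\geq 0$, the decay of $\|(c_{j,k})_k\|_{s^{n+2}}$ in $|j|$ absorbing all remaining factors.

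The case $\OM(\Rp)\cong\kappa_1^\prime\compltens s=\varprojlim_n\varinjlim_{\lambda<0}Y_{\lambda,n}$ is analogous: for each fixed $n$ one selects the growth exponent $\gamma_n$ from the definition of $\OM(\Rp)$ (or from the inductive limit hypothesis on the sequence side) and applies Lemma~\ref{lemma1SR} or Lemma~\ref{l:PsiInvD}; the equivalence $(\max\{x,1/x\})^{\pm\gamma}\asymp 2^{\pm\gamma|j|}$ on $\supp\psi_{j,k}$ converts directly between polynomial growth in $x,1/x$ and exponential growth in $|j|$, with the dependence of $\gamma$ on $n$ matching the fact that the projective limit is on the outside and the inductive limit on the inside.

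The case $\OC(\Rp)\cong\kappa_1^\prime\compltens_{\iota} s=\varinjlim_{\lambda<0}\varprojlim_n Y_{\lambda,n}$ is the genuinely new one and carries the main obstacle: since $\lambda$ must remain fixed while $n$ ranges over $\N$, any estimate that loses a power of $2^{jm}$ per derivative order $m$ would force $\lambda$ to depend on $m$ and destroy the projective limit structure. The resolution exploits the dilation invariance of the operators $x^m D^m$ present in the $X_{\lambda,n}$-norm. In the forward direction, one rewrites
$2^{jm}|\varphi^{(m)}(2^j y)|=y^{-m}(2^j y)^m|\varphi^{(m)}(2^j y)|\leq C_m(\max\{2^j y,1/(2^j y)\})^{-\lambda}\|\varphi\|_{X_{\lambda,n}}$
for $y\in K_\psi$, whence Lemma~\ref{lemma1SR} yields $|\Psi_{j,k}(\varphi)|\leq C'_n(1+|k|)^{-n}2^{j/2-\lambda|j|}\|\varphi\|_{X_{\lambda,n}}$, placing $\Psi(\varphi)$ in $Y_{\lambda-1/2,n}$ for every $n$. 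For the converse, the same mechanism runs backwards: on $\supp\psi_{j,k}$ the factor $x^m\asymp 2^{jm}$ exactly cancels the worrisome $2^{-jm}$ appearing in Lemma~\ref{l:PsiInvD}, producing the $m$-independent estimate $x^m|\sum_k c_{j,k}\psi_{j,k}^{(m)}(x)|\leq C''_m\, 2^{-j/2}\|(c_{j,k})_k\|_{s^{m+2}}$; multiplying by $(\max\{x,1/x\})^\lambda\lesssim 2^{\lambda|j|}$ on the support then gives a bound uniform in $j$ and $m$, provided $\lambda\leq\mu-1/2$, which is freely available through the inductive limit on the target side of $\OC(\Rp)$.
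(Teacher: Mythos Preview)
Your proposal is correct and rests on the same two workhorses (Lemmas~\ref{lemma1SR} and~\ref{l:PsiInvD}) and the same dilation mechanism as the paper. The only real difference is organizational: the paper observes that all three function spaces can be written as limits of the \emph{same} family of Banach spaces $X_{\lambda,n}$ from \eqref{eq Banach aux X}, namely $\S(\Rp)=\varprojlim_{\lambda>0,\,n} X_{\lambda,n}$, $\OC(\Rp)=\varinjlim_{\lambda<0}\varprojlim_n X_{\lambda,n}$, $\OM(\Rp)=\varprojlim_n\varinjlim_{\lambda<0} X_{\lambda,n}$, and then proves a single building-block estimate (Lemma~\ref{lemma iso XY}) showing $\Psi:X_{\lambda,n}\to Y_{\lambda-1/2,n}$ and $\Psi^{-1}:Y_{\lambda,n+2}\to X_{\lambda-1,n}$ continuously for \emph{every} $\lambda\in\R$. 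Your $\OC$ computation is exactly this lemma, and your separate treatments of $\S(\Rp)$ and $\OM(\Rp)$ are special cases of it (your ``vanishing to infinite order at $0$'' and ``Schwartz decay at $\infty$'' are precisely what the $X_{\lambda,n}$-norm with $\lambda>0$ encodes). So the paper's packaging is more economical---one lemma instead of three cases---but the mathematical content is the same; your case-by-case route has the minor advantage of making the analytic meaning of each limit visible.
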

	
\begin{proof} That the first two mappings \eqref{eq iso S} are isomorphisms follows from \cite[Proposition 3.7]{Saneva-Vindas}, but the short proof we give here simultaneously applies to all cases. Besides being involved in \eqref{eq def OC R+}, the Banach spaces \eqref{eq Banach aux X} can also be used to describe $\S(\Rp)$ and $\OM(\Rp)$. In fact, clearly,
\[\mathcal{S}(\Rp)= \varprojlim_{\lambda>0,\: n\in\N} X_{\lambda,n} \quad \mbox{and} \quad 
\OM(\Rp)= \varprojlim_{n\in \N} \varinjlim_{\lambda<0} X_{\lambda,n}.
\]
The result is then an immediate consequence of Remark \ref{remark-double} and Lemma \ref{lemma iso XY} shown below.
\end{proof}
\begin{lemma}
\label{lemma iso XY} Let $\lambda\in\R$ and $n\in \N$. The mappings 
\[
\Psi: X_{\lambda,n}\to Y_{\lambda-1/2,n} \quad \mbox{and} \quad \Psi^{-1}: Y_{\lambda,n+2}\to X_{\lambda-1,n}
\]
are well-defined and continuous.
\end{lemma}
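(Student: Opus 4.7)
The plan is to establish the two continuity statements in parallel; the unifying structural observation is that the support condition $\supp \psi_{j,k} \subseteq [2^j R_0, 2^j R_1]$ forces $2^j$ and $x$ to be comparable on that support, so that $\max\{x, 1/x\} \asymp 2^{|j|}$ uniformly in $x$ there (with implicit constants depending only on $R_0$ and $R_1$). This identification converts the multiplicative weight $(\max\{x, 1/x\})^\lambda$ defining $X_{\lambda, n}$ into the geometric weight $2^{\lambda|j|}$ defining $Y_{\lambda, n}$, and it is the common thread in both arguments.

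For $\Psi: X_{\lambda, n} \to Y_{\lambda - 1/2, n}$, I would simply apply Lemma \ref{lemma1SR} and then estimate $\|\varphi(2^j \cdot)\|_{C^n_{K_\psi}}$ in terms of $\|\varphi\|_{X_{\lambda, n}}$. For $y = 2^j x$ with $x \in K_\psi = [R_0, R_1]$, the definition of the $X_{\lambda, n}$-norm yields $|\varphi^{(m)}(y)| \leq y^{-m}(\max\{y, 1/y\})^{-\lambda}\|\varphi\|_{X_{\lambda, n}}$; using $y \asymp 2^j$ and $\max\{y, 1/y\} \asymp 2^{|j|}$ this reduces to $2^{jm}|\varphi^{(m)}(2^j x)| \lesssim 2^{-\lambda|j|}\|\varphi\|_{X_{\lambda, n}}$ uniformly for $x \in K_\psi$ and $m \leq n$. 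Inserting this into \eqref{eq:PsiCinfty} and using $2^{j/2} \leq 2^{|j|/2}$ to absorb the sign of $j$ gives exactly $(1+|k|)^n 2^{(\lambda - 1/2)|j|} |\Psi_{j, k}(\varphi)| \lesssim \|\varphi\|_{X_{\lambda, n}}$.

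For $\Psi^{-1}: Y_{\lambda, n+2} \to X_{\lambda - 1, n}$, I would set $\varphi := \sum_{j, k} c_{j, k}\psi_{j, k}$ and note that, by the support condition, for each fixed $x \in \Rp$ at most $N := \lceil \log_2(R_1/R_0) \rceil + 1$ values of $j$ give a nontrivial $k$-sum. The key bound is the derivative-order-specific estimate $\sup_x \left| \sum_k c_{j, k}\psi_{j, k}^{(m)}(x) \right| \leq C_m \|(c_{j, k})_k\|_{s^{m+2}}/2^{j(m+1/2)}$, which is exactly what the computation in the proof of Lemma \ref{l:PsiInvD} produces when carried out at differentiation order $m$. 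Setting $M := \|(c_{j, k})\|_{Y_{\lambda, n+2}}$ and using $\|(c_{j, k})_k\|_{s^{m+2}} \leq \|(c_{j, k})_k\|_{s^{n+2}} \leq M \cdot 2^{-\lambda|j|}$ for $m \leq n$, together with $x^m \leq R_1^m 2^{jm}$ and $(\max\{x, 1/x\})^{\lambda - 1} \lesssim 2^{(\lambda - 1)|j|}$ on the support, the $2^{jm}$ from $x^m$ cancels the $2^{-jm}$ inside $2^{-j(m+1/2)}$, the weights $2^{(\lambda-1)|j|}$ and $2^{-\lambda|j|}$ combine into $2^{-|j|}$, and the residual $2^{-|j| - j/2}$ is $\leq 1$ for every $j$ (equal to $2^{-3j/2}$ for $j \geq 0$ and to $2^{-|j|/2}$ for $j < 0$). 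Summing over the at most $N$ contributing scales gives $\|\varphi\|_{X_{\lambda - 1, n}} \lesssim M$.

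The main subtlety is the choice of estimate in the second direction. If one tries to use Lemma \ref{l:PsiInvD} in the form stated, bounding the maximum over $m \leq n$ by $1/(2^{j/2}\min\{1, 2^{jn}\})$, the factor collapses to $1/2^{j/2}$ for $j \geq 0$ regardless of $m$, and pairing with $x^m \asymp 2^{jm}$ leaves an offending factor $2^{(m - 3/2)j}$ that blows up as soon as $m \geq 2$. The remedy is the sharper bound $1/2^{j(m+1/2)}$, in which the $2^{-jm}$ coming from differentiating $\psi(x/2^j)$ exactly cancels the $2^{jm}$ coming from $x^m$. This cancellation, which reflects the Mellin scaling structure natural to $\Rp$, is what makes the isomorphism work and what explains the asymmetric index shifts $(\lambda, n) \to (\lambda - 1/2, n)$ and $(\lambda, n+2) \to (\lambda - 1, n)$ appearing in the two directions of the Lemma.
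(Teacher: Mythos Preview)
Your proof is correct and follows essentially the same route as the paper: both directions rest on Lemma~\ref{lemma1SR} and the order-$m$ computation inside the proof of Lemma~\ref{l:PsiInvD}, combined with the conversion between the multiplicative weight $(\max\{x,1/x\})^{\lambda}$ and the dyadic weight $2^{\lambda|j|}$ on $\supp\psi_{j,k}$. The only cosmetic difference is that for $\Psi^{-1}$ the paper packages the weight comparison via the submultiplicativity $\omega_{\lambda}(xy)\le\omega_{\lambda}(x)\omega_{|\lambda|}(y)$ and then sums over all $j$ using $\sum_{j}2^{-|j|/2}<\infty$, whereas you use the direct equivalence $\max\{x,1/x\}\asymp 2^{|j|}$ on the support together with the finite overlap of the dyadic supports at each point; the two devices are interchangeable here.
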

\begin{proof} First note that the function $\omega_{\lambda}(x)=(\max\{x,1/x\})^{\lambda}$ satisfies 
\begin{equation}
\label{eq semi multi weight}
\omega_{\lambda}(x y)\leq \omega_{\lambda}(x)\omega_{|\lambda|}(y), \qquad \forall x,y>0.
\end{equation}
Using this inequality and \eqref{eq:PsiCinfty}, we obtain for all $\varphi \in X_{\lambda,n}$
\begin{align*}
\left\| \Psi(\varphi) \right\|_{Y_{\lambda-1/2, n}}&\leq C_n \left\| \varphi \right\|_{X_{\lambda,n}}   \sup_{x\in K_{\psi},\: j\in\Z} 2^{j/2}\omega_{\lambda-1/2} (2^{j}) \frac{ \omega_{-\lambda}(2^j x)}{\min\{1, x^{n} \}}
\\
&\leq C_n  \left\| \varphi \right\|_{X_{\lambda,n}}   \sup_{x\in K_{\psi}}  \frac{ \omega_{|\lambda|}(x)}{\min\{1, x^{n} \}}.
\end{align*}
Proceeding as in the proof of Lemma \ref{l:PsiInvD} and using again \eqref{eq semi multi weight}, we have for all $(c_{j, k})_{j, k \in \Z} \in Y_{\lambda,n+2}$
\begin{align*}
& \left\| \Psi^{-1}((c_{j, k})_{j, k \in \Z}) \right\|_{X_{\lambda-1, n}} \\
&\leq \frac{4^{n} \pi^{n + 2}}{3} \left(\sup_{x\in K_{\psi}} \max\{1,x^{n}\}\right)\|\psi\|_{X_{|\lambda-1|,n}} \sum_{j\in\Z} 2^{-j/2}\omega_{\lambda-1}(2^j)  \sup_{k\in \Z}(1+|k|)^{n+2}|c_{j,k}|
\\
&
\leq  \frac{4^{n} \pi^{n + 2}}{3}  \left(\sup_{x\in K_{\psi}} \max\{1,x^{n}\}\right) \left(\sum_{j \in \Z} 2^{-|j|/2}\right)  \|\psi\|_{X_{|\lambda-1|,n}} \| (c_{j, k})_{j, k \in \Z}\|_{Y_{\lambda, n+2}}.
\end{align*}
\end{proof}


\begin{thebibliography}{99}

\bibitem{B-RepOC}
{\sc C.~Bargetz}, {\em A sequence space representation of L. Schwartz' space $\OC$}, 
Arch. Math. (Basel) 98 (2012), 317--326.

\bibitem{B-CommVVTab}
{\sc C. Bargetz}, {\em Commutativity of the Valdivia-Vogt table of representations of function spaces}, 
Math. Nachr. 287 (2014), 10--22.

\bibitem{B-ComplVVTab}
{\sc C.~Bargetz}, {\em Completing the Valdivia-Vogt tables of sequence-space representations of spaces of smooth functions and distributions}, 
Monatsh. Math. 177(1) (2015), 1--14.

\bibitem{B-Explicit}
{\sc C.~Bargetz}, {\em Explicit representations of spaces of smooth functions and distributions}, 
 J. Math. Anal. Appl. 424(2) (2015), 1491--1505.
 
\bibitem{BDN-Explicit}
{\sc C.~Bargetz, A.~Debrouwere, E.~A.~Nigsch}, {\em Sequence space representations for spaces of smooth functions and distributions via Wilson bases}, 
Proc. Amer. Math. Soc., to appear.

\bibitem{D-TenLecWav}
{\sc I.~Daubechies}, {\em Ten Lectures on Wavelets}, 
SIAM, Philadelphia, Pennsylvania, 1992.

\bibitem{D-V2021}
{\sc A.~Debrouwere, J.~Vindas,} {\em Topological properties of convolutor spaces via the short-time Fourier transform}, 
Trans. Amer. Math. Soc. 374 (2021), 829--861.

\bibitem{G-ProdTensTopEspNucl} 
{\sc A.~Grothendieck}, {\em Produits tensoriels topologiques et espaces nucl\'{e}aires}, 
Mem. Amer. Math. Soc. 16, 1955.

\bibitem{H-W-FirstCourseWavelets}
{\sc E.~Hern\'{a}ndez, G.~Weiss}, {\em A First Course on Wavelets}, 
CRC Press, Boca Raton, 1996.

\bibitem{Jarchow}
{\sc H.~Jarchow}, {\em Locally Convex Spaces}, 
Stuttgart: B. G. Teubner, 1981.

\bibitem{OW-Explicit}
{\sc N.~Ortner, P.~Wagner}, {\em Explicit representations of L. Schwartz’ spaces $\mathcal{D}_{L^p}$ and $\mathcal{D}'_{L^p}$ by the sequence spaces $s \widehat{\otimes} l^p$ and $s' \widehat{\otimes} l^p$, respectively, for $1 < p < \infty$},
J. Math. Anal. Appl. 404(1) (2013), 1--10.

\bibitem{Saneva-Vindas} 
{\sc K.~Saneva, J.~Vindas,} {\em Wavelet expansions and asymptotic behavior of distributions}, 
J. Math. Anal. Appl. 370 (2010), 543--554.

\bibitem{S-ThDistr}
{\sc L.~Schwartz}, {\em Th\'{e}orie des distributions}, 
Hermann, Paris, 1966.

\bibitem{V-RepDK}
{\sc M.~Valdivia}, {\em A representation of the space {$\D(K)$}}, 
J. Reine Angew. Math. 320 (1980), 97--98.

\bibitem{V-SpDLp}
{\sc M.~Valdivia}, {\em On the space {$\D_{L^{p}}$}}, 
pp. 759--767, in:  {\sc L.~Nachbin} (ed.), {\em Mathematical Analysis and Applications, Part B}, Adv. in Math. Suppl. Stud., Academic Press, New York, 1981.

\bibitem{V-RepOM}
{\sc M.~Valdivia}, {\em A representation of the space $\mathcal{O}_{M}$},
Math. Z. 77 (1981), 463--478.

\bibitem{V-TopicLCS}
{\sc M.~Valdivia}, {\em Topics in Locally Convex Spaces}, 
North-Holland Mathematics Studies, vol. 67, North Holland Publishing Co., Amsterdam, 1982.

\bibitem{V-SeqSpRepTestFuncDist} 
{\sc D.~Vogt}, {\em Sequence space representations of spaces of test functions and distributions}, 
pp. 405--443, in: {\sc G.~I.~Zapata} (ed.), {\em Functional Analysis, Holomorphy, and Approximation Theory}, Lecture Notes in Pure and Appl. Math., 83, Dekker, New York, 1983.

\end{thebibliography}
\end{document}